\theoremstyle{definition}
\newtheorem{theorem}{Theorem}
\newtheorem{assumption}{Assumption}
\theoremstyle{remark}
\title{\LARGE \bf
A Continuous-Time Optimal Control Approach to Congestion Control
}
\author{ Harshvardhan Uppaluru, Hamid Emadi, and Hossein Rastgoftar
\thanks{H. Uppaluru, H. Emadi and H. Rastgoftar are with the Aerospace and Mechanical Engineering Department at University of Arizona. Emails: \{huppaluru, hamidemadi, hrastgoftar\}@email.arizona.edu}}
\begin{document}

\maketitle

\thispagestyle{empty}
\pagestyle{empty}

\begin{abstract}

Traffic congestion has become a nightmare to modern life in metropolitan cities. \textit{On average, a driver spending \textit{X} hours a year stuck in traffic} is one of most common sentences we often read regarding traffic congestion. Our aim in this article is to provide a method to control this seemingly ever-growing problem of traffic congestion. We model traffic dynamics using a continuous-time mass-flow conservation law, and apply optimal control techniques to control traffic congestion. First, we apply the mass-flow conservation law to specify traffic feasibility and present continuous-time dynamics for modeling traffic as a network problem by defining a network of interconnected roads (NOIR). The traffic congestion control is formulated as a boundary control problem and we use the concept of state-transition matrix to help with the optimization of boundary flow by solving a constrained optimal control problem using quadratic programming. Finally, we show that the proposed algorithm is successful by simulating on a NOIR.

\end{abstract}

\section{INTRODUCTION}

Urbanization and rapid increase in the usage of private vehicles has led to the problem of urban traffic congestion becoming prominent in almost every city and developing into a global issue. One of the major reasons behind traffic congestion is the inefficient use of the urban traffic networks. Traffic congestion continues to have a significant negative impact on the economy \cite{muneera2018economic}, and the environment \cite{ye2013road} \cite{annan2015traffic} due to increase in vehicle emissions, degrading air quality and posing significant health risks  \cite{chor2011impact} \cite{o2004impact}. Focused on improving mobility, saving energy, understanding and influencing travel behavior, traffic control is a significant and active research area in the field of Intelligent Transportation Systems (ITS). A number of methods dealing with prediction, control and optimization of traffic congestion and variety of approaches such as model-based \& model-free have been proposed by researchers to reduce and control traffic congestion.

The traditional light-based approach to deal with automated operation of traffic signals at junctions is called fixed-cycle control. To optimize traffic signal timings, a standard fixed-cycle control tool called traffic network study tool has been used \cite{robertson1969transyt} \cite{tiwari2008continuity}. To optimize for green time interval at junctions, fuzzy-based signal control method was employed \cite{balaji2011type} \cite{chiu1992adaptive}. A number of physics-based approaches have been proposed that make use of the Fundamental Diagram to determine traffic state \cite{zhang2012ordering} \cite{zhang2011transitions}. Link-based Kinematic Wave model (LKWM) was developed to model dynamic traffic coordination in continuous-time\cite{han2016continuous}. Spillback congestion was incorporated in \cite{gentile2007spillback} \cite{adamo1999modelling}. Hierarchical fuzzy-based systems and genetic algorithms were the basis for the novel approach proposed in \cite{zhang2014hierarchical} to build traffic congestion prediction systems. A model based on changes in driving behavior that does not rely on traffic flow monitoring infrastructure was proposed in \cite{ito2017predicting} thereby forecasting traffic congestion. 

%\begin{figure}[ht]
%    \centering
%    \includegraphics[width=\linewidth]{road_network.png}
%    \caption{Example NOIR}
%    \label{fig:examplenoir}
%\end{figure}

Inspired by mass-flow conservation, \cite{jafari2018decentralized} developed first order traffic dynamics. A popular choice of model-based approach for optimization of traffic coordination is Model Predictive Control (MPC). MPC is a model-based feedback control technique relying on real-time optimization. \cite{lin2012efficient} provided a structured network-wide traffic controller that was capable of coordinating an urban traffic network. \cite{jamshidnejad2017sustainable} developed an MPC system that used a gradient-based optimization approach to find a solution to the traffic control optimization problem. Other methods which have been applied to deal with model-based traffic management are Neural Networks (NN) \cite{kumar2015short, akhter2016neural, tang2017improved, moretti2015urban}, Markov Decision Process (MDP) \cite{ong2017markov, haijema2008mdp, liu2021boundary, liu2021conservation, rastgoftar2020integrative, rastgoftar2019integrative, rastgoftar2020resilient}, Formal Methods \cite{coogan2017formal} \cite{coogan2015traffic}, Mixed Non-Linear Programming (MNLP) \cite{christofa2013person}, and Optimal Control \cite{jafari2018decentralized} \cite{wang2018dynamic}. 

% \subsection{Contributions}
This paper proposes a continuous-time approach for modeling and control of traffic in a network of interconnected roads (NOIR). We first  apply mass-flow conservation and obtain a new model for dynamics of traffic coordination which is presented by a stochastic process and  governed by a first-order differential equation. Traffic congestion control is then defined as a boundary control problem with the control input representing the boundary inflow and state aggregating traffic density across the NOIR. The boundary inflow is optimized by  solving a constrained optimal control problem with the cost penalizing the traffic density across the network. We define the control constraints  such that  feasibility of the model is assured, while the proposed traffic modeling and control assures avoidance of traffic backflow. We also use the Fundamental Diagram  to impose the traffic feasibility conditions such that the traffic congestion can be minimized by solving a constrained continuous-time optimal control problem.
% and (ii) the traffic feasibility conditions are continuously updated and incorporated into planning and control of traffic coordination. 
% \begin{enumerate}
%     \item 
% \end{enumerate}
% Our contributions in this article are towards a novel continuous-time optimal control approach to coordinate and control traffic congestion in a network of interconnected roads (NOIR). Traffic coordination is modeled as a mass-flow conservation problem thus advancing the previous works \cite{liu2021boundary} \cite{liu2021conservation}  \cite{rastgoftar2020integrative} \cite{rastgoftar2019integrative} \cite{rastgoftar2020resilient} by considering the problem in continuous-time to determine optimal boundary flow.

This paper is organized as follows: Section II is the preliminary section that introduces the concept of a NOIR. The problem statement is explained along with the assumptions in Section III. The continuous-time dynamics behind the traffic network is presented in Section IV. Our continuous-time optimal traffic control approach is described in Section V. We finally present our simulation results using the described traffic model and optimal control approach on a NOIR in Section VI before putting forward our concluding remarks in Section VII.

\section{Preliminaries}

A NOIR describes a finite set of serially-connected road elements, where $i \in \mathcal{V}$ represents each unique road element. Let us define $\mathcal{V}_{in}$, $\mathcal{V}_{out}$, and $\mathcal{V}_I$ as the sets consisting of the index numbers of the inlet, outlet, and interior road elements{\color{blue},} respectively. Therefore, $\mathcal{V}_{in} = \{1,\cdots,N_{in}\}$, $\mathcal{V}_{out} = \{N_{in}+1,\cdots,N_{out}\}$ and $\mathcal{V}_I = \{N_{out}+1,\cdots,N\}$. $N_{in}$, $N_{out}$, and $N_I$ correspond to the total number of inlets, outlets and interior road elements. The complete set of road elements can be represented as $\mathcal{V} = \mathcal{V}_{in} \cup \mathcal{V}_{out} \cup \mathcal{V}_I=\left\{1,\cdots,N\right\}$. Therefore, $N$ corresponds to the total number of road elements in the NOIR. The interactions between road elements are established by graph $\mathcal{G}(\mathcal{V}, \mathcal{E})$, where $\mathcal{V}$ corresponds to the vertices and $\mathcal{E} \subset \mathcal{V} \times \mathcal{V}$ corresponds to the edges. A connection directed from road element $i \in \mathcal{V}$ to road element $j \in \mathcal{V}$ is represented as the edge $(i, j) \in \mathcal{E}$. An \textit{in-neighbor set} $\mathcal{I}_i \triangleq \{ j | (j,i) \in \mathcal{E} \} \subset \mathcal{V}_{in} \cup \mathcal{V}_I$ specifies upstream adjacent road elements for every road element $i \in \mathcal{V}$. Similarly, for every road element $i \in \mathcal{V}$, \textit{out-neighbor set} $\mathcal{O}_i \triangleq \{ j | (i,j) \in \mathcal{E} \} \subset \mathcal{V}_{out} \cup \mathcal{V}_I$ specified downstream adjacent road elements. 

\begin{figure}[ht]
    \centering
    \includegraphics[width=\linewidth]{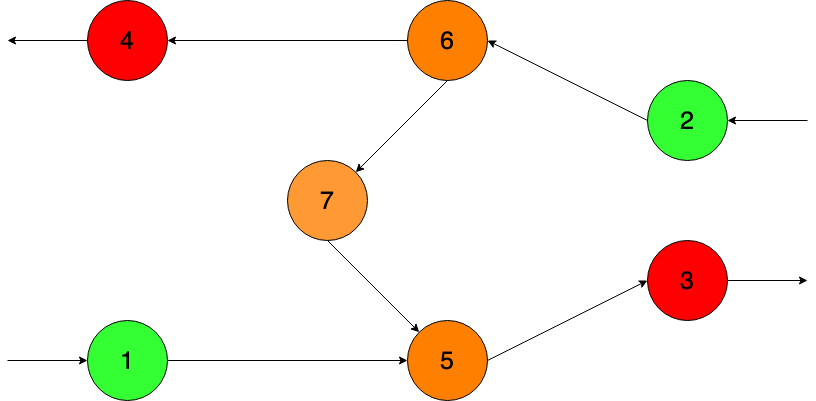}
    \caption{An example of a NOIR. Green nodes represent inlet roads. Red nodes represent outlet roads. Orange nodes represent interior road elements.}
    \label{fig:simple_noir}
\end{figure}

Consider the example of a simple NOIR presented in Fig \ref{fig:simple_noir}. Since there are $2$ inlets, we have the set $\mathcal{V}_{in} = \{1, 2\}$ and $N_{in} = 2$. Similarly, there are $2$ outlets, we have $\mathcal{V}_{out} = \{3, 4\}$ and $N_{out} = 2$. Finally, there are $3$ interior road elements, we have $\mathcal{V}_I = \{5, 6, 7\}$ and $N_I = 3$. The complete set $\mathcal{V} = \mathcal{V}_{in} \cup \mathcal{V}_{out} \cup \mathcal{V}_I = \{1,\cdots,7\}$ and $N = 7$. The graph $\mathcal{G}(\mathcal{V}, \mathcal{E})$ is defined by $\mathcal{E} = \{ (1,5),(5,3),(2,6),(6,4),(6,7),(7,5) \}$.

\begin{assumption}\label{assump1}
For the purposes of this paper, we assume that, for every inlet element $i \in \mathcal{V}_{in}$, $\mathcal{I}_i = \emptyset$ and $|\mathcal{O}_i| = 1$. Correspondingly, for every outlet element $j \in \mathcal{V}_{out}$, we assume that $|\mathcal{I}_j| = 1$ and $|\mathcal{O}_j| = \emptyset$. This assumption was originally proposed in \cite{rastgoftar2020integrative}.
\end{assumption}

\section{Problem Statement}
For every road element $i\in \mathcal{V}$,  $u_i(t)$, $y_i(t)$, $\rho_i(t)$, and $z_i(t)$ denote the internal inflow, internal outflow, traffic density, and external outflow, respectively. We apply mass conservation law and model traffic coordination in road $i\in \mathcal{V}_I$ by

\begin{equation} \label{rho_dot}
    \dot{\rho}_i=y_i-z_i
\end{equation}
where
\begin{subequations}
\begin{equation} \label{y_i}
    y_i(t)=\sum_{j\in \mathcal{O}_i}q_{j,i}y_i,\qquad t\in\left[t_0,t_f\right]
\end{equation}
\begin{equation} \label{z_i}
    z_i(t)=p_i\rho_i(t),\qquad t\in \left[t_0,t_f\right]
\end{equation}
\end{subequations}
where $p_i\in \left(0,1\right]$ is the outflow probability of road $i\in \mathcal{V}$; $q_{j,i}\in \left[0,1\right]$ is the tendency probability specifying the fraction of outflow of road $i$ directed from $i$ to $j\in \mathcal{O}_i$, where $t_0$ and $t_f$ are fixed \textit{initial} and \textit{final} times, respectively.

\begin{assumption}\label{assump2}
This paper assumes that $p_i$ and $q_{i,j}$ remain constant over the time interval $\left[t_0,t_f\right]$.
\end{assumption} 

\begin{assumption}\label{assump3}
This paper assumes that $\dot{\rho}_i(t)=0$ at any time $t\in \left[t_0,t_f\right]$, if $i\in \mathcal{V}_{in}\bigcup \mathcal{V}_{out}$, i.e. $y_i(t)=z_i(t)$ and $\rho_i(t)$ remain constant at any time $t\in \left[t_0,t_f\right]$, if  $i\in \mathcal{V}_{in}\bigcup \mathcal{V}_{out}$.
\end{assumption} 

\begin{assumption}\label{assump4}
This paper assumes that $u_i(t)=0$ at any time $t\in \left[t_0,t_f\right]$, if $i\in \mathcal{V}_I\bigcup \mathcal{V}_{out}$.
\end{assumption} 

The objective of this paper is to determine boundary control $u_i$ at every boundary inlet road $i\in \mathcal{V}_{in}$ such that the traffic cost function 
\begin{equation}\label{cost}
    C={\frac{1}{2}}\int_{t_0}^{t_f}\left(\sum_{i\in\mathcal{V}_{I}}r_i\rho_i^2(t)+\sum_{j\in\mathcal{V}_{in}}w_ju_j^2(t)\right)dt
\end{equation}
is minimized, constraint \eqref{stateconstraint} and the following input, inequality and equality constraints are satisfied:
\begin{subequations}
\begin{equation}\label{C1}
    \bigwedge_{i\in\mathcal{V}_{in}}\left(u_i(t)>0\right),\qquad \forall t\in \left[t_0,t_f\right],
\end{equation}
\begin{equation}\label{C2}
    \bigwedge_{i\in\mathcal{V}\in  \mathcal{V}_{in}\bigcup\mathcal{V}_{out}}\left(u_i(t)=0\right),\qquad \forall t\in \left[t_0,t_f\right],
\end{equation}
\begin{equation}\label{C3}
    \sum_{i\in\mathcal{V}_{in}}u_i(t)=u_0,\qquad \forall t\in \left[t_0,t_f\right],
\end{equation}
\end{subequations}
where $r_i>0$ and $w_j\geq0$ are constant scaling factors, for $i\in \mathcal{V}_I$ and $j\in \mathcal{V}_{in}$, and the net boundary inflow $u_0$ is constant. The inequality \eqref{C1} specifies a feasibility condition to assure that back-flow is avoided at every inlet boundary road $i\in \mathcal{V}_{in}$. Constraint \eqref{C2} implies that the external flow is $0$, if $i\in \mathcal{V}_{I}\bigcup\mathcal{V}_{out}$ (see Assumption \ref{assump3}). Constraint \eqref{C3} assures that the net inflow to the NOIR is constant at any time $t\in \left[t_0, t_f\right]$. This condition is imposed to ensure that $u_0$ cars are permitted to enter the NOIR when the demand for using the NOIR is high.

\begin{assumption}

\begin{figure}[ht]
    \centering
    \includegraphics[width=\linewidth]{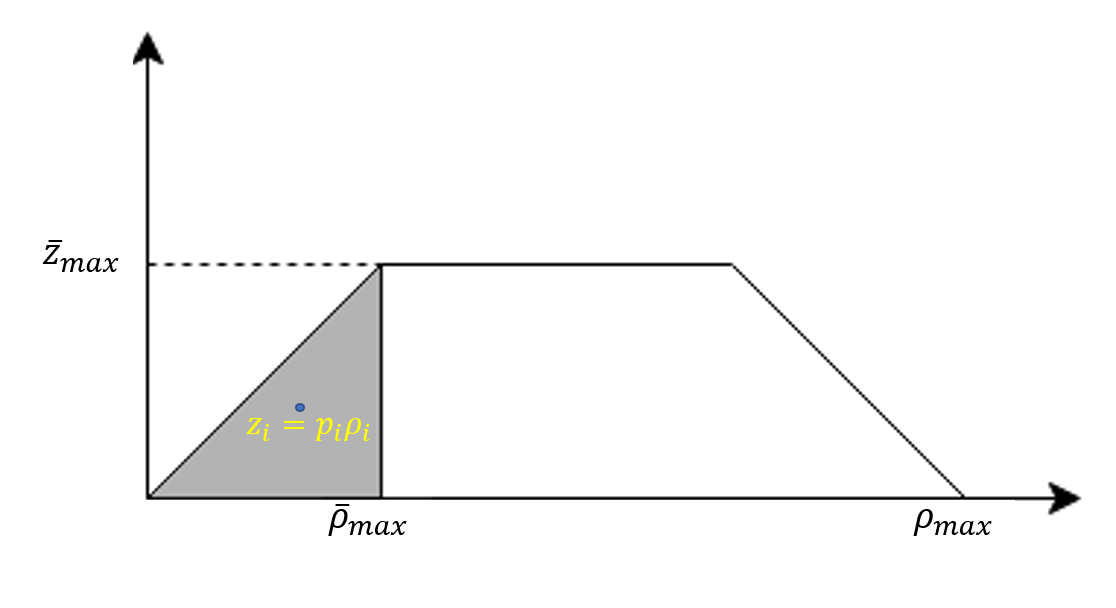}
    \caption{Schematic of the Fundamental Diagram}
    \label{fig:assum5}
\end{figure}

In this paper, we use the Fundamental Diagram to impose the following feasibility condition
\begin{equation}\label{stateconstraint}
    \bigwedge_{i\in \mathcal{V}}\left(x_i(t)\leq \bar{\rho}_{max}\right),\qquad \forall t\in \left[t_0,t_f\right]
\end{equation}
where $\bar{\rho}_{max}$ is assigned by the Fundamental Diagram (see Fig. \ref{fig:assum5}). Therefore, the traffic outflow probability $p_i$ ($i\in \mathcal{V}_I$) must satisfies the following inequality condition:
\begin{equation}
    p_i\leq {\bar{z}_{max}\over \bar{\rho}_{max}},\qquad \forall i\in \mathcal{V}_I,
\end{equation}
where $\bar{z}_{max}$ is the maximum out flow for every road $i\in \mathcal{V}_I$. 
\end{assumption}

\section{Traffic Network Dynamics}

According to Assumption \ref{assump3}, traffic density remains constant at inlet and outlet roads. Therefore, the traffic dynamics are only defined for the interior road elements. To model traffic coordination, we define the state vector $\mathbf{x} = [\rho_{N_{out+1}},\cdots,\rho_{N}]^{T} \in \mathbb{R}^{(N - N_{out}) \times 1}$, boundary input vector $\mathbf{u}=\begin{bmatrix}u_1&\cdots&u_{N_{in}}\end{bmatrix}^T\in \mathbb{R}^{N_{in}\times 1}$, the inflow vector $\mathbf{y} \in \mathbb{R}^{(N - N_{out}) \times 1}$, and the outflow vector $\mathbf{z} \in \mathbb{R}^{(N - N_{out}) \times 1}$. We also define the positive definite and diagonal outflow probability matrix $\mathbf{P}=\mathbf{diag}\left(p_{N_{out}+1},\cdots,p_N\right) \in \mathbb{R}^{(N - N_{out}) \times (N - N_{out})}$ and the non-negative tendency probability matrix $\mathbf{Q}=\left[Q_{ij}\right] \in \mathbb{R}^{(N - N_{out}) \times (N - N_{out})}$ that is defined as follows:
% that are defined as follows:

\begin{equation}
Q_{ij}=\begin{cases}
q_{i+N_{out},j+N_{out}}&\left(i+N_{out}\right)\in \mathcal{O}_{j+N_{out}}\\
0&\mathrm{otherwise}
\end{cases}.
\end{equation}
% \end{subequations}
By considering \eqref{rho_dot}, \eqref{y_i}, and \eqref{z_i}, we can relate $\mathbf{y}$ and $\mathbf{z}$ to $\mathbf{x}$ by
\begin{subequations}
\begin{equation}
    \mathbf{y}=\mathbf{Q}\mathbf{P}\mathbf{x},
\end{equation}
\begin{equation}
    \mathbf{z}=\mathbf{P}\mathbf{x},
\end{equation}
\end{subequations}
and model the network traffic dynamics by the following dynamics:
\begin{equation}\label{DiscreteeeDynamicssssss}
    \dot{\mathbf{x}}\left(t\right)=\mathbf{A}\mathbf{x}\left(t\right)+\mathbf{B}\mathbf{u}\left(t\right),\qquad \forall t\in \left[t_0,t_f\right],
\end{equation}
where
\begin{equation} \label{eq:A}
    \mathbf{A}=\left(\mathbf{Q}-\mathbf{I}\right)\mathbf{P},
\end{equation}
$\mathbf{I} \in \mathbb{R}^{(N-N_{out}) \times (N-N_{out})}$, and $\mathbf{B}=\left[b_{ij}\right] \in \mathbb{R}^{(N-N_{out}) \times N_{in}}$ is defined as follows:

\begin{equation} \label{eq:B}
     b_{ij}=
    \begin{cases}
    1&j\in \mathcal{I}_{i+N_{out}}\\
    0&\mathrm{otherwise}
    \end{cases}
    .
\end{equation}

\begin{theorem}\label{thm1}
Assume graph $\mathcal{G}$  defining the NOIR interconnections has the following properties:
\begin{enumerate}
    \item{There exists at least a path from every boundary inlet node $j\in \mathcal{V}_{in}$ towards $i\in \mathcal{V}_I$.}
    \item{There exists at least a path from $i\in \mathcal{V}_I$ towards every boundary outlet node $h\in \mathcal{V}_{out}$.}
\end{enumerate}
Then, Matrix $\mathbf{A}$ is Hurwitz with  eigenvalues that are all placed inside a unit  disk centered at $-1+0j$. 
% Also, dynamics \eqref{DiscreteeeDynamicssssss} is BIBO stable and eigenvalues of matrix $\bar{\mathbf{Q}}_{D,k}$  are all placed inside a disk of radius $r_{Q,k}$ centered at the origin.
\end{theorem}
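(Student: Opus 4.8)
The plan is to reduce the spectral claim to two facts about the nonnegative matrix $\mathbf{M}\triangleq\mathbf{A}+\mathbf{I}=\mathbf{Q}\mathbf{P}+\left(\mathbf{I}-\mathbf{P}\right)$: that its spectral radius is at most $1$, and that $1$ is not one of its eigenvalues. Observe that $\lambda$ is an eigenvalue of $\mathbf{A}$ if and only if $\mu=\lambda+1$ is an eigenvalue of $\mathbf{M}$, so the assertion that every eigenvalue of $\mathbf{A}$ lies strictly inside the disk centered at $-1$ with radius $1$ is exactly the assertion $|\mu|<1$ for every eigenvalue $\mu$ of $\mathbf{M}$, i.e.\ $\rho(\mathbf{M})<1$. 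Since the disk $\{z:|z+1|<1\}$ is contained in the open left half-plane, this simultaneously delivers the Hurwitz property, so it suffices to prove $\rho(\mathbf{M})<1$.

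First I would establish $\rho(\mathbf{M})\le 1$. Because $\mathbf{Q}\ge 0$ entrywise and each $p_i\in(0,1]$, the matrix $\mathbf{M}=\mathbf{Q}\mathbf{P}+\left(\mathbf{I}-\mathbf{P}\right)$ is nonnegative. Its $j$-th column sum equals $p_j\left(\sum_i Q_{ij}\right)+\left(1-p_j\right)$, and since the tendency probabilities leaving road $j$ obey $\sum_i Q_{ij}=\sum_{k\in\mathcal{O}_{j+N_{out}}\cap\mathcal{V}_I}q_{k,\,j+N_{out}}\le 1$, each column sum is bounded above by $p_j\cdot 1+\left(1-p_j\right)=1$. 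As the maximum absolute column sum (the induced $1$-norm) bounds the spectral radius, this gives $\rho(\mathbf{M})\le 1$.

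Next, by the Perron--Frobenius theorem the number $\rho(\mathbf{M})$ is itself an eigenvalue of $\mathbf{M}$; combined with $\rho(\mathbf{M})\le 1$, upgrading this to $\rho(\mathbf{M})<1$ requires only that $1$ be \emph{not} an eigenvalue of $\mathbf{M}$, equivalently that $\mathbf{A}=\left(\mathbf{Q}-\mathbf{I}\right)\mathbf{P}$ be nonsingular. Since $\mathbf{P}$ is invertible, this reduces to $1\notin\mathrm{spec}(\mathbf{Q})$, which I would obtain from $\rho(\mathbf{Q})<1$. I expect this to be the crux, and it is where the graph hypotheses are essential. Reading $\mathbf{Q}^{T}$ as the substochastic transition matrix of the absorbing chain in which mass travels along flow edges and is absorbed whenever a road drains to an outlet, I would argue by contradiction: if $\rho(\mathbf{Q})=1$, take a nonnegative Perron eigenvector and let $S$ be the set of indices attaining its maximal entry. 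Forcing equality in the column-sum estimate above shows every $k\in S$ has column sum exactly $1$, hence sends no flow to any outlet, and routes all of its flow to interior roads that again lie in $S$. Then $S$ would be closed under following flow edges while never leaking to an outlet, contradicting the graph hypothesis that every interior road is joined by a directed path to the outlet boundary. Therefore $\rho(\mathbf{Q})<1$, $\mathbf{A}$ is nonsingular, and consequently $\rho(\mathbf{M})<1$, establishing that every eigenvalue of $\mathbf{A}$ lies inside the unit disk centered at $-1+0j$ and that $\mathbf{A}$ is Hurwitz.
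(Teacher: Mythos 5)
Your proof is correct, and it takes a genuinely different---in fact more complete---route than the paper's. The paper argues in three asserted steps: (i) the graph conditions place the eigenvalues of $\mathbf{Q}$ strictly inside the unit disk; (ii) hence $\mathbf{Q}-\mathbf{I}$ has its spectrum in the open disk centered at $-1+0j$; (iii) because $\mathbf{P}$ is positive diagonal with entries at most $1$, the product $\mathbf{A}=\left(\mathbf{Q}-\mathbf{I}\right)\mathbf{P}$ keeps its spectrum in that disk. Steps (i) and (iii) are stated without justification, and (iii) in particular is a nontrivial spectral claim: right-multiplication by a diagonal contraction does not act on eigenvalues in any simple way, and making it rigorous requires a Perron--Frobenius subinvariance argument of its own. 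You sidestep (iii) entirely by analyzing the nonnegative matrix $\mathbf{M}=\mathbf{A}+\mathbf{I}=\mathbf{Q}\mathbf{P}+\left(\mathbf{I}-\mathbf{P}\right)$: the column-sum (induced $1$-norm) bound gives $\rho(\mathbf{M})\le 1$, and strictness is recovered from nonsingularity of $\mathbf{A}$, which via invertibility of $\mathbf{P}$ reduces to $\rho(\mathbf{Q})<1$---precisely the fact the paper asserts in step (i), and which you actually prove with the maximal-entry/closed-set argument from the path-to-outlet hypothesis. So your route supplies proofs of exactly the two facts the paper leaves as assertions, at the price of invoking substochasticity of $\mathbf{Q}$ (column sums at most $1$), a modeling fact implicit in the definition of the tendency probabilities rather than stated in the theorem. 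Two caveats, shared with the paper rather than specific to you: the ``path'' hypotheses must be read as paths along edges carrying strictly positive tendency probability for your flow-edge argument (and indeed the theorem itself) to hold---an interior cycle with $q=1$ around it and $q=0$ toward an outlet defeats the statement; and only hypothesis 2 is actually used---reachability from the inlets in hypothesis 1 plays no role in the spectral claim, in your proof or in the paper's.
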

\begin{proof}
If assumptions of Theorem \ref{thm1} are satisfied, eigenvalues of matrix $\mathbf{Q}$ are strictly inside the unit disk centered at the origin. Therefore,  eigenvalues of matrix $\mathbf{Q}-\mathbf{I}$ are strictly inside the unit disk centered at the $-1+0\mathbf{j}$ which in turn implies that matrix $\mathbf{Q}-\mathbf{I}$ is Hurwitz. Because $\mathbf{P}$ is positive definite and diagonal with diagonal elements that are all less than or equal to $1$, eigenvalues of matrix $\mathbf{A}=\left(\mathbf{Q}-\mathbf{I}\right)\mathbf{P}$ are strictly inside the unit disk centered at the $-1+0\mathbf{j}$ and $\mathbf{A}$ is Hurwitz.
\end{proof}
Theorem \ref{thm1} implies that the traffic network dynamics  \eqref{DiscreteeeDynamicssssss} is bounded-input-bounded-output (BIBO) stable.

\section{Traffic Control}
The objective of the traffic control is to determine the optimal boundary input $\mathbf{u}^*$ such that the traffic coordination cost, defined by \eqref{cost}, is minimized and the traffic feasibility conditions \eqref{C1}, \eqref{C2}, and \eqref{C3} are all satisfied. This problem can be formalized as follows:
\begin{equation}
    \min{\frac{1}{2}}\int_{t_0}^{t_f}\left(\mathbf{x}^T\mathbf{R}\mathbf{x}+\mathbf{u}^T\mathbf{W}\mathbf{u}\right)dt
\end{equation}
subject to
\begin{subequations}
\begin{equation}\label{CC1}
    \mathbf{u}\geq 0,
\end{equation}
% \begin{equation}
%   \mathbf{1}_{1\times N_{in}} \mathbf{u}=u_0,
% \end{equation}
\begin{equation}\label{CC2}
   \mathbf{1}_{1\times N_{in}} \mathbf{u}=u_0,
\end{equation}
\end{subequations}
where $t_0$ and $t_f$ are fixed, $\mathbf{x}_0=\mathbf{x}\left(t_0\right)$ is given, $\mathbf{x}_f=\mathbf{x}\left(t_f\right)$ is free, $\mathbf{W}=\mathbf{diag}\left(w_1,\cdots,w_{N_{in}}\right)\in \mathbb{R}^{N_{in}\times N_{in}}$ is diagonal and positive definite, and $\mathbf{R}=\mathbf{diag}\left(r_{N_{out}+1},\cdots,r_{N}\right)\in \mathbb{R}^{\left(N-N_{out}\right)\times\left(N-N_{out}\right)}$ is diagonal and positive semi-definite. To solve the above constrained optimal control problem, we first define Hamiltonian as 
\begin{equation}
    {H}\left(\mathbf{x},\mathbf{u},\mathbf{\lambda}\right)={\frac{1}{2}}\left(\mathbf{x}^T\mathbf{R}\mathbf{x}+\mathbf{u}^T\mathbf{W}\mathbf{u}\right)+\lambda^T\left(\mathbf{A}\mathbf{x}+\mathbf{B}\mathbf{u}\right),
\end{equation}
where $\lambda\in \mathbb{R}^{\left(N-N_{out}\right)\times 1}$ is the co-state vector. By imposing necessary conditions from Table 3.2-1 in \cite{lewis2012optimal}, $\dot{\mathbf{x}}=\bigtriangledown_\lambda H$ and $\dot{\mathbf{\lambda}}=-\bigtriangledown_\mathbf{x} H$, $\mathbf{x}_{sys}=\begin{bmatrix}\left(\mathbf{x}^*\right)^T&\left(\lambda^*\right)^T\end{bmatrix}^T$ is updated by the following dynamics:
\begin{equation}
    \dot{\mathbf{x}}_{sys}(t)=\mathbf{A}_{sys}{\mathbf{x}}_{sys}(t)+\mathbf{B}_{sys}{\mathbf{u}}^*(t),\qquad t\in \left[t_0,t_f\right]
\end{equation}
subject to $\mathbf{x}\left(t_0\right)=\mathbf{x}_0$ and $\lambda\left(t_f\right)=\mathbf{0}$, where 
\begin{subequations}
\begin{equation}
    \mathbf{A}_{sys}=\begin{bmatrix}\mathbf{A}&\mathbf{0}\\-\mathbf{R}&-\mathbf{A}^T\end{bmatrix}\in \mathbb{R}^{2\left(N-N_{out}\right)\times 2\left(N-N_{out}\right)},
\end{equation}
\begin{equation}
    \mathbf{B}_{sys}=\begin{bmatrix}\mathbf{B}\\\mathbf{0}\end{bmatrix}\in \mathbb{R}^{2\left(N-N_{out}\right)\times N_{in}},
\end{equation}
\end{subequations}
and ${\mathbf{u}}^*$ is assigned by solving the following optimization problem
\begin{equation}\label{eq:u_star}
    \mathbf{u}^*=\min\limits_\mathbf{u} {H}\left(\mathbf{x}^*,\mathbf{u},\mathbf{\lambda}^*\right)
\end{equation}
subject to constraints \eqref{CC1} and \eqref{CC2}.

Defining the state transition matrix by the following equation
\begin{equation}\label{eq:phi}
    \Phi(t, t_0) = e^{A_{sys}(t-t_0)}
\end{equation}
we can write the solution to the system in terms of state transition matrix as follows
\begin{equation}\label{eq:x_sys}
    \mathbf{x}_{sys}(t) = \Phi(t, t_0)\mathbf{x}_{sys}(t_0) + \Psi(t, t_0),
\end{equation}
where
\begin{equation}\label{eq:psi}
    \Psi(t, t_0) = \int_{t_0}^{t}\Phi(\epsilon, t_0)\mathbf{B}_{sys}\mathbf{u}(\epsilon)d\epsilon.
\end{equation}
Eq. \eqref{eq:x_sys} can also be written in matrix form as

\begin{equation}\label{eq:x_lambda}
    \begin{bmatrix}
    \mathbf{x}(t) \\
    \lambda(t)
    \end{bmatrix} = \begin{bmatrix}
    \Phi_{11}(t, t_0) & \Phi_{12}(t, t_0) \\
    \Phi_{21}(t, t_0) & \Phi_{22}(t, t_0)
    \end{bmatrix}\begin{bmatrix}
    \mathbf{x}_0 \\
    \lambda_0
    \end{bmatrix} + \begin{bmatrix}
    \Psi_{1}(t, t_0) \\
    \Psi_{2}(t, t_0)
    \end{bmatrix}
\end{equation}

Using the above formulations, we can find $\lambda_0$ as
\begin{equation}\label{eq:l_0}
    \lambda_{0} = - (\Phi_{22})^{-1}(\Phi_{21}\mathbf{x}_0 + \Psi_{2})
\end{equation}

Substituting $\lambda_0$ in \eqref{eq:x_lambda} iteratively for a predetermined number of iterations, we will obtain the optimal boundary inflow $\mathbf{u}^*$ (See Algorithm \ref{alg:ctoccc}).

\begin{algorithm}
\caption{Continuous-Time Optimal Control to Congestion Control Algorithm}\label{alg:ctoccc}

\begin{algorithmic}[1]
\Require A Network of Interconnected Roads (NOIR)
\State Randomly initialize $\mathbf{P}$ and $\mathbf{Q}$
\State Obtain $\mathbf{A}$ \& $\mathbf{B}$ using \eqref{eq:A} \& \eqref{eq:B} respectively
\State $R \gets I$, $M \gets N_{in}$, $N \gets 2(N-N_{out})$
\State Choose $m$ \& $n$
\State Initialize $\lambda$, $\mathbf{x}$, $\mathbf{x}_0$
\For {$i \gets 1,\cdots,m$}
    \For{$j \gets 1,\cdots,n$}
        \State $H \gets I \in \mathbb{R}^{M \times M}$
        \State $f \gets \mathbf{B}\lambda(j)$
        \State Compute $\mathbf{u}(j)$ \eqref{eq:u_star} using MATLAB's quadprog
    \EndFor
    \State Compute $\Psi$, $\Phi$ using \eqref{eq:phi} \& \eqref{eq:psi} respectively
    \State Compute $\lambda_0$ using \eqref{eq:l_0}
    \For{$j \gets 1,\cdots,n$}
        \State Compute $\Phi$, $\Psi$ using \eqref{eq:phi} \& \eqref{eq:psi} respectively
        \State $\mathbf{x}(j, i) \gets \Phi \mathbf{x}_0 + \Psi$
        \State $\lambda(j, i) \gets \mathbf{x}(N+1:2*N, j, i)$
    \EndFor
\EndFor
\end{algorithmic}
\end{algorithm}

\section{Simulation Results}

\begin{figure}[ht]
    \centering
    \includegraphics[width=\linewidth]{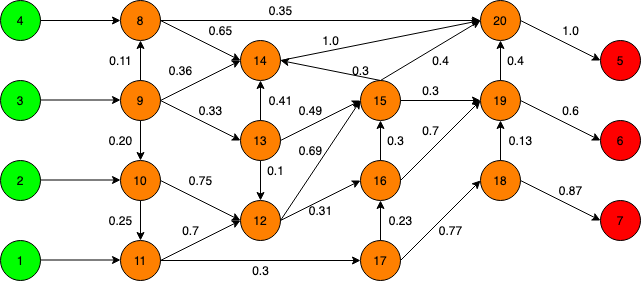}
    \caption{A network of interconnected roads (NOIR)}
    \label{fig:example_noir}
\end{figure}

We consider the NOIR originally presented in \cite{rastgoftar2020integrative}, that consists of $20$ unidirectional roads shown in Fig \ref{fig:example_noir}. Following the approach mentioned in Section II, we have the complete node set $\mathcal{V} = \{1,\cdots,20\}$ which can be represented as $\mathcal{V} = \mathcal{V}_{in} \cup \mathcal{V}_{out} \cup \mathcal{V}_I$ where $\mathcal{V}_{in} = \{1,\cdots,4\}$, $\mathcal{V}_{out} = \{5,\cdots,7\}$, $\mathcal{V}_I = \{8,\cdots,20\}$. Therefore $N_{in} = | \mathcal{V}_{in} | = 4$, $N_{out} = | \mathcal{V}_{out} | = 3$ and $N_I = | \mathcal{V}_I | = 13$. Hence, $N = 20$.

The outflow probabilities for each of the interior road elements $8,\cdots,20$ are $\bar{p}_{8} = 0.67$, $\bar{p}_{9} = 0.76$, $\bar{p}_{10} = 0.71$, $\bar{p}_{11} = 0.59$, $\bar{p}_{12} = 0.67$, $\bar{p}_{13} = 0.94$, $\bar{p}_{14} = 0.94$, $\bar{p}_{15} = 0.83$, $\bar{p}_{16} = 0.69$, $\bar{p}_{17} = 0.58$, $\bar{p}_{18} = 0.97$, $\bar{p}_{19} = 0.96$ and $\bar{p}_{20} = 0.91$. The matrices $\mathbf{Q}$ and $\mathbf{P}$ are obtained from the probabilities shown in the Fig. \ref{fig:example_noir}. Matrix $\mathbf{Q}$ and $\mathbf{P}$ are of the shape ($N_I$, $N_I$). Using (\ref{eq:A}) and (\ref{eq:B}), we determine the traffic tendency matrix $\mathbf{A}$ and $\mathbf{B}$ respectively. We choose $m=15$, $n=2000$ for optimization purposes. We follow the approach mentioned in Algorithm \ref{alg:ctoccc}.

\begin{figure}[ht]
    \centering
    \includegraphics[width=\linewidth]{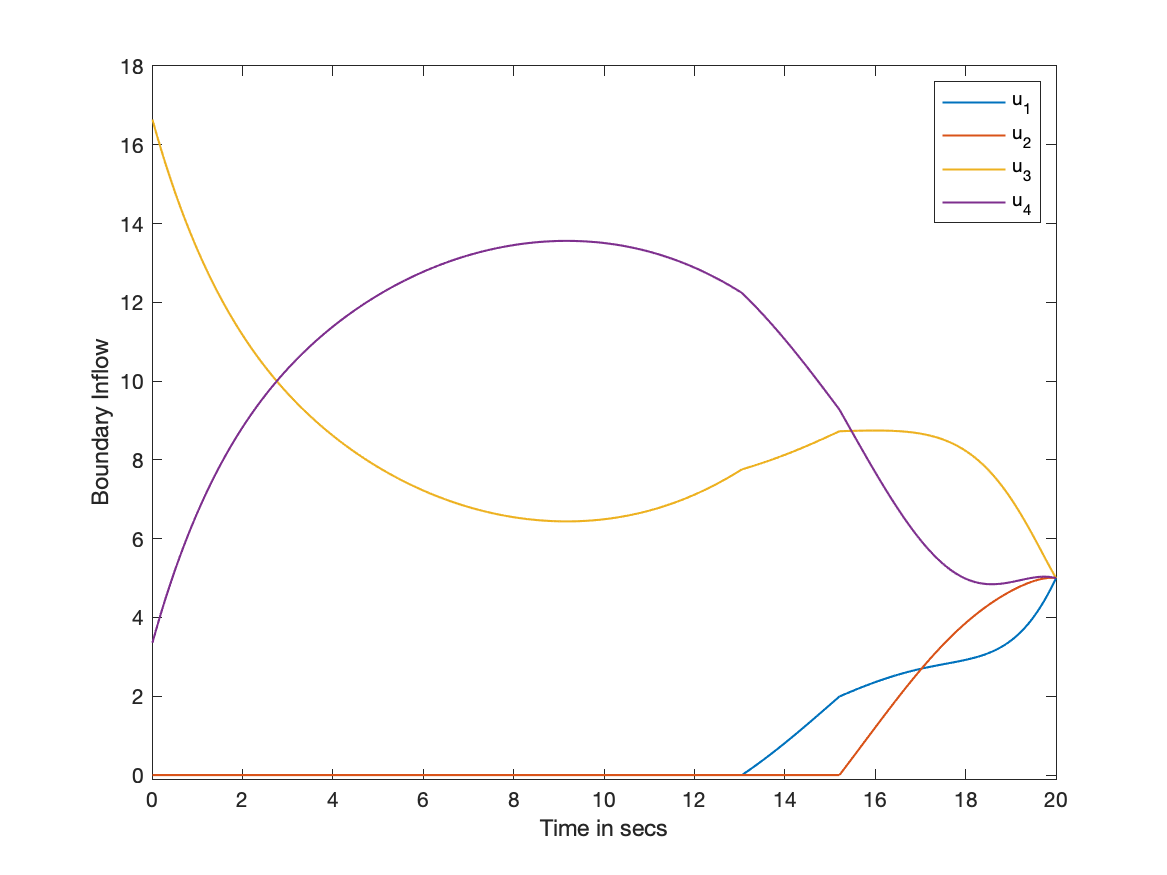}
    \caption{Optimal boundary inflow rate $u_1,\cdots,u_4$.}
    \label{fig:example_inflow}
\end{figure}

\begin{figure}[ht]
    \centering
    \includegraphics[width=\linewidth]{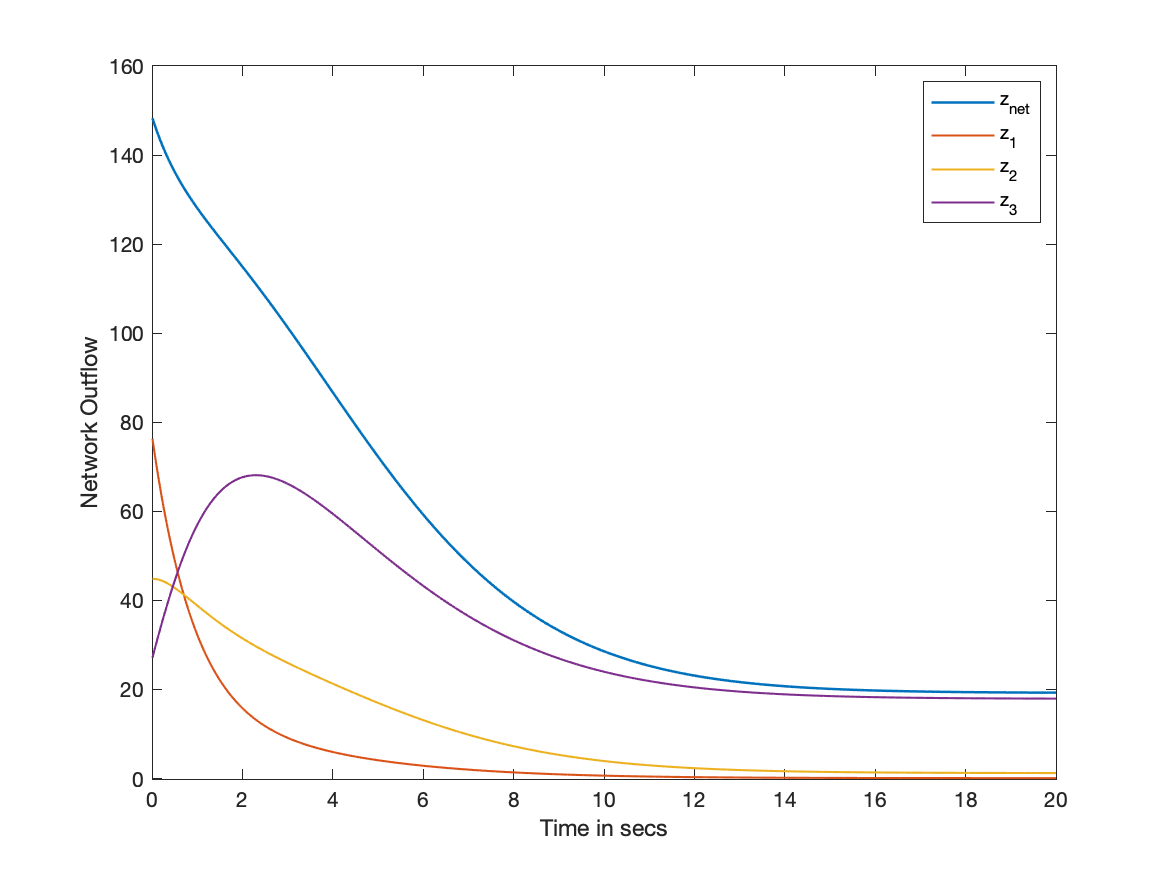}
    \caption{Optimal boundary outflow rate $z_5,\cdots,z_7$}
    \label{fig:example_outflow}
\end{figure}

\begin{figure}[ht]
    \centering
    \includegraphics[width=\linewidth]{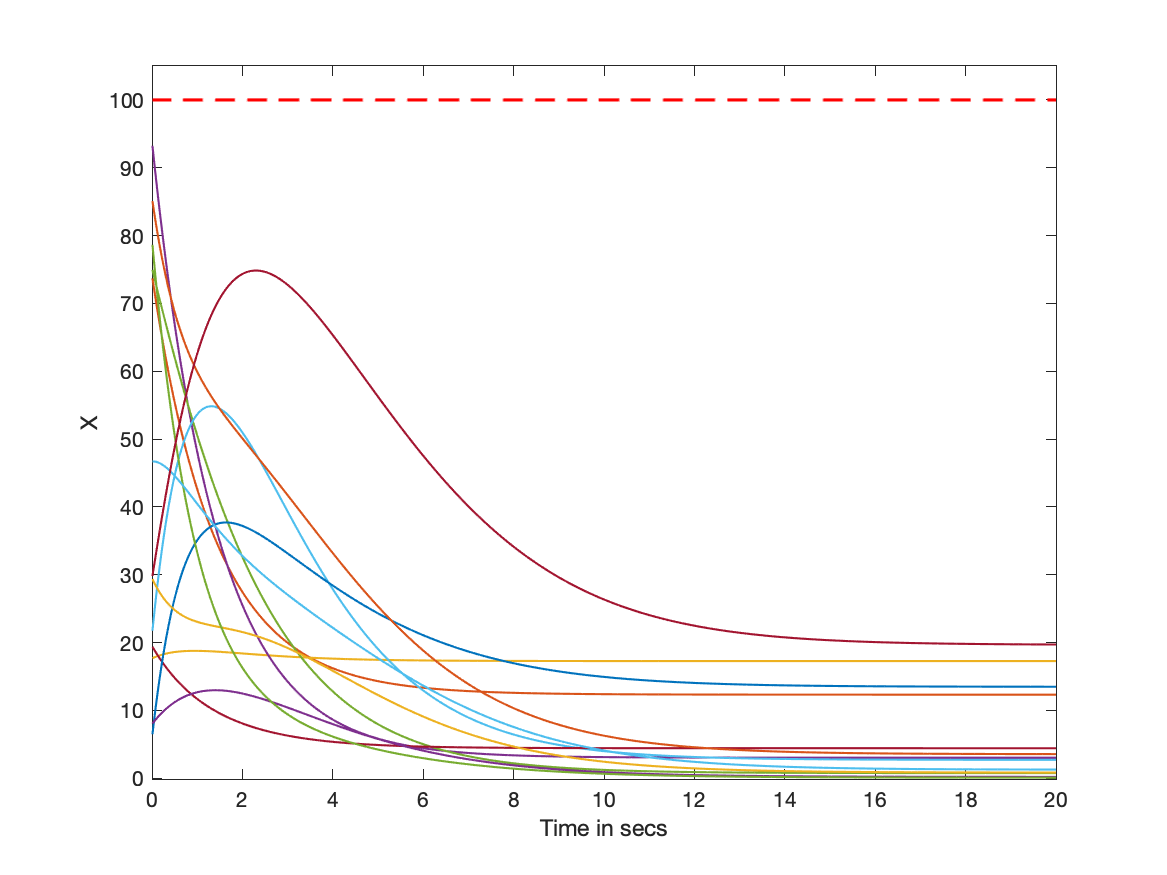}
    \caption{Traffic density at interior road elements}
    \label{fig:example_state}
\end{figure}

 The number of vehicles entering the NOIR is restricted at any time i.e., $u_0 = 20$. We plot the simulation results for the four inlets boundary road elements $1,\cdots,4 \in \mathcal{V}_{in}$ versus time in Fig \ref{fig:example_inflow}. We also plot the simulation results for the three outlet boundary road elements $5,\cdots,7 \in \mathcal{V}_{out}$ versus time in Fig \ref{fig:example_outflow}. We can observe from these two figures that the steady state in terms of traffic density is achieved after about 15 seconds. We can also see that at $t_f=20s$
 
 \begin{equation}
     u_1 \cong u_2 \cong u_3 \cong u_4 \cong 5
 \end{equation}
 
 \begin{equation}
     z_{net} = z_{5} + z_{6} + z_{7} \cong u_{0} \cong 20
 \end{equation}
 
 Fig \ref{fig:example_state} shows that the traffic densities at each of the $13$ interior road elements achieves steady-state values after about $15$ seconds.

\subsection{Relationship between $\lambda$ and $\mathbf{R}$}

Depending on how we initialized $\mathbf{R}$, we have observed that initial $\lambda$ values are also affected (though $\lambda(t_f) = 0$). For our purposes, we have used $\mathbf{R} =\zeta \mathbf{I} \in \mathbb{R}^{M \times M}$ with the multiplication factor $\zeta>0$. Fig \ref{fig:relation} shows that increasing $\zeta$ results in boosting  the maximum value of $\lambda_0$.

\begin{figure}[ht]
    \centering
    \includegraphics[width=\linewidth]{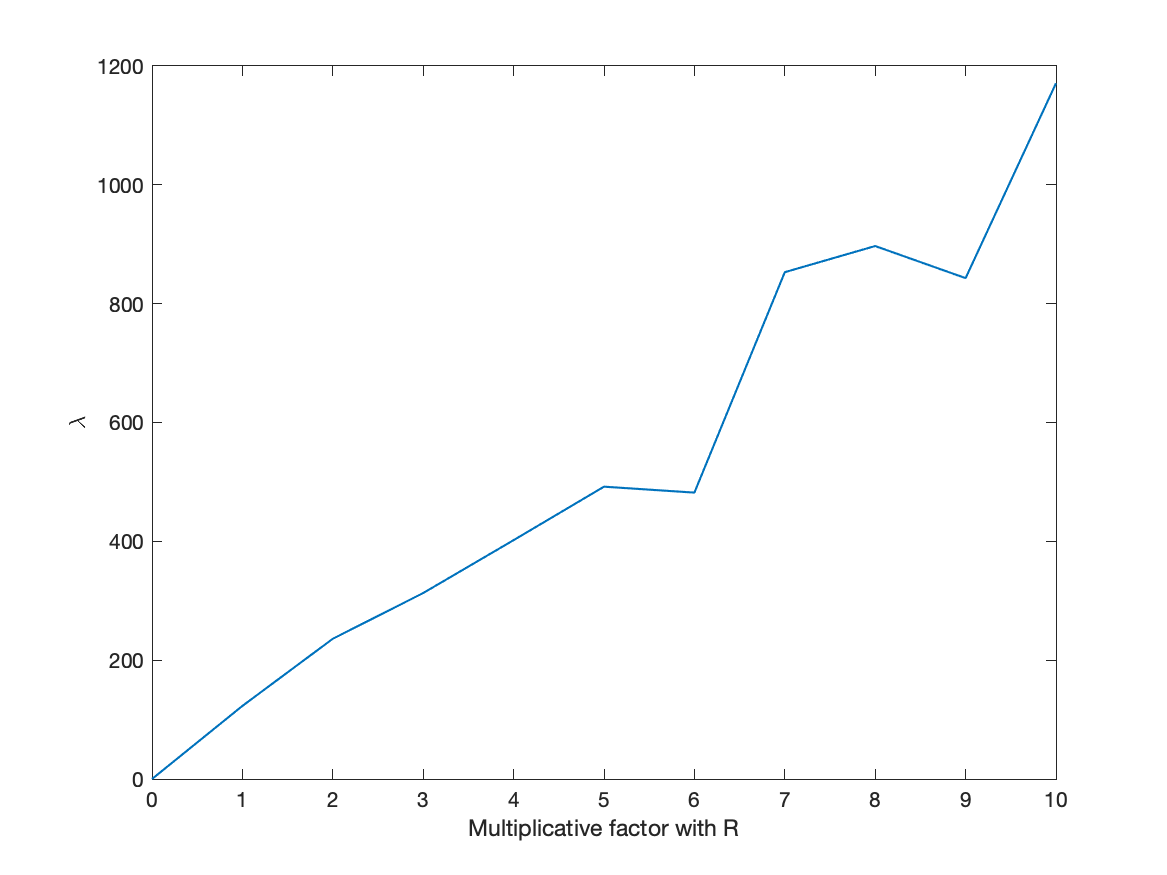}
    \caption{Relationship between $\lambda$ and $\mathbf{R}$}
    \label{fig:relation}
\end{figure}

\section{Conclusion}

This articles introduces a continuous-time optimal control approach to model and control traffic congestion that has been presented as an algorithm. In comparison with previous work, a continuous-time formulation of the modeling and dynamics of traffic congestion was presented. Simulation studies on a NOIR exhibits that the proposed model and control has been able to achieve the results through boundary control of traffic flow. Future work in this area include modeling and controlling traffic congestion efficiently as a Markov Decision Process.
   We also plan to  implementation on real-world traffic networks to learn and improve the traffic flow.

\section{Acknowledgement}
This work has been supported by the National Science
Foundation under Award Nos. 2133690 and 1914581. We would also like to thank Mr. Xun Liu at Villanova University for his help. 

\bibliographystyle{IEEEtran}
\bibliography{myref}

% Generated by IEEEtran.bst, version: 1.14 (2015/08/26)
\begin{thebibliography}{10}
\providecommand{\url}[1]{#1}
\csname url@samestyle\endcsname
\providecommand{\newblock}{\relax}
\providecommand{\bibinfo}[2]{#2}
\providecommand{\BIBentrySTDinterwordspacing}{\spaceskip=0pt\relax}
\providecommand{\BIBentryALTinterwordstretchfactor}{4}
\providecommand{\BIBentryALTinterwordspacing}{\spaceskip=\fontdimen2\font plus
\BIBentryALTinterwordstretchfactor\fontdimen3\font minus
  \fontdimen4\font\relax}
\providecommand{\BIBforeignlanguage}[2]{{%
\expandafter\ifx\csname l@#1\endcsname\relax
\typeout{** WARNING: IEEEtran.bst: No hyphenation pattern has been}%
\typeout{** loaded for the language `#1'. Using the pattern for}%
\typeout{** the default language instead.}%
\else
\language=\csname l@#1\endcsname
\fi
#2}}
\providecommand{\BIBdecl}{\relax}
\BIBdecl

\bibitem{muneera2018economic}
C.~Muneera and K.~Karuppanagounder, ``Economic impact of traffic
  congestion-estimation and challenges,'' \emph{European Transport-Trasporti
  Europei}, no.~68, 2018.

\bibitem{ye2013road}
L.~Ye, Y.~Hui, and D.~Yang, ``Road traffic congestion measurement considering
  impacts on travelers,'' \emph{Journal of Modern Transportation}, vol.~21,
  no.~1, pp. 28--39, 2013.

\bibitem{annan2015traffic}
J.~Annan, J.~Mensah, and N.~Boso,
  ``Traffic'congestion'impact'on'energy'consumption'and'workforce
  (productivity:(empirical (evidence (from a developing country!'' 2015.

\bibitem{chor2011impact}
C.~H. Chor and R.~M. Habibur, ``An impact evaluation of traffic congestion on
  ecology,'' 2011.

\bibitem{o2004impact}
M.~O'Mahony and H.~Finlay, ``Impact of traffic congestion on trade and
  strategies for mitigation,'' \emph{Transportation research record}, vol.
  1873, no.~1, pp. 25--34, 2004.

\bibitem{robertson1969transyt}
D.~I. Robertson, ``Transyt: a traffic network study tool,'' 1969.

\bibitem{tiwari2008continuity}
G.~Tiwari, J.~Fazio, S.~Gaurav, and N.~Chatteerjee, ``Continuity equation
  validation for nonhomogeneous traffic,'' \emph{Journal of Transportation
  Engineering}, vol. 134, no.~3, pp. 118--127, 2008.

\bibitem{balaji2011type}
P.~Balaji and D.~Srinivasan, ``Type-2 fuzzy logic based urban traffic
  management,'' \emph{Engineering Applications of Artificial Intelligence},
  vol.~24, no.~1, pp. 12--22, 2011.

\bibitem{chiu1992adaptive}
S.~Chiu, ``Adaptive traffic signal control using fuzzy logic,'' in
  \emph{Proceedings of the Intelligent Vehicles92 Symposium}.\hskip 1em plus
  0.5em minus 0.4em\relax IEEE, 1992, pp. 98--107.

\bibitem{zhang2012ordering}
J.~Zhang, W.~Klingsch, A.~Schadschneider, and A.~Seyfried, ``Ordering in
  bidirectional pedestrian flows and its influence on the fundamental
  diagram,'' \emph{Journal of Statistical Mechanics: Theory and Experiment},
  vol. 2012, no.~02, p. P02002, 2012.

\bibitem{zhang2011transitions}
------, ``Transitions in pedestrian fundamental diagrams of straight corridors
  and t-junctions,'' \emph{Journal of Statistical Mechanics: Theory and
  Experiment}, vol. 2011, no.~06, p. P06004, 2011.

\bibitem{han2016continuous}
K.~Han, B.~Piccoli, and W.~Szeto, ``Continuous-time link-based kinematic wave
  model: formulation, solution existence, and well-posedness,''
  \emph{Transportmetrica B: Transport Dynamics}, vol.~4, no.~3, pp. 187--222,
  2016.

\bibitem{gentile2007spillback}
G.~Gentile, L.~Meschini, and N.~Papola, ``Spillback congestion in dynamic
  traffic assignment: a macroscopic flow model with time-varying bottlenecks,''
  \emph{Transportation Research Part B: Methodological}, vol.~41, no.~10, pp.
  1114--1138, 2007.

\bibitem{adamo1999modelling}
V.~Adamo, V.~Astarita, M.~Florian, M.~Mahut, and J.~Wu, ``Modelling the
  spill-back of congestion in link based dynamic network loading models: a
  simulation model with application,'' in \emph{14th International Symposium on
  Transportation and Traffic TheoryTransportation Research Institute}, 1999.

\bibitem{zhang2014hierarchical}
X.~Zhang, E.~Onieva, A.~Perallos, E.~Osaba, and V.~C. Lee, ``Hierarchical fuzzy
  rule-based system optimized with genetic algorithms for short term traffic
  congestion prediction,'' \emph{Transportation Research Part C: Emerging
  Technologies}, vol.~43, pp. 127--142, 2014.

\bibitem{ito2017predicting}
T.~Ito and R.~Kaneyasu, ``Predicting traffic congestion using driver
  behavior,'' \emph{Procedia computer science}, vol. 112, pp. 1288--1297, 2017.

\bibitem{jafari2018decentralized}
S.~Jafari and K.~Savla, ``A decentralized optimal feedback flow control
  approach for transport networks,'' \emph{arXiv preprint arXiv:1805.11271},
  2018.

\bibitem{lin2012efficient}
S.~Lin, B.~De~Schutter, Y.~Xi, and H.~Hellendoorn, ``Efficient network-wide
  model-based predictive control for urban traffic networks,''
  \emph{Transportation Research Part C: Emerging Technologies}, vol.~24, pp.
  122--140, 2012.

\bibitem{jamshidnejad2017sustainable}
A.~Jamshidnejad, I.~Papamichail, M.~Papageorgiou, and B.~De~Schutter,
  ``Sustainable model-predictive control in urban traffic networks: Efficient
  solution based on general smoothening methods,'' \emph{IEEE Transactions on
  Control Systems Technology}, vol.~26, no.~3, pp. 813--827, 2017.

\bibitem{kumar2015short}
K.~Kumar, M.~Parida, and V.~K. Katiyar, ``Short term traffic flow prediction in
  heterogeneous condition using artificial neural network,'' \emph{Transport},
  vol.~30, no.~4, pp. 397--405, 2015.

\bibitem{akhter2016neural}
S.~Akhter, R.~Rahman, and A.~Islam, ``Neural network (nn) based route weight
  computation for bi-directional traffic management system,''
  \emph{International Journal of Applied Evolutionary Computation (IJAEC)},
  vol.~7, no.~4, pp. 45--59, 2016.

\bibitem{tang2017improved}
J.~Tang, F.~Liu, Y.~Zou, W.~Zhang, and Y.~Wang, ``An improved fuzzy neural
  network for traffic speed prediction considering periodic characteristic,''
  \emph{IEEE Transactions on Intelligent Transportation Systems}, vol.~18,
  no.~9, pp. 2340--2350, 2017.

\bibitem{moretti2015urban}
F.~Moretti, S.~Pizzuti, S.~Panzieri, and M.~Annunziato, ``Urban traffic flow
  forecasting through statistical and neural network bagging ensemble hybrid
  modeling,'' \emph{Neurocomputing}, vol. 167, pp. 3--7, 2015.

\bibitem{ong2017markov}
H.~Y. Ong and M.~J. Kochenderfer, ``Markov decision process-based distributed
  conflict resolution for drone air traffic management,'' \emph{Journal of
  Guidance, Control, and Dynamics}, vol.~40, no.~1, pp. 69--80, 2017.

\bibitem{haijema2008mdp}
R.~Haijema and J.~van~der Wal, ``An mdp decomposition approach for traffic
  control at isolated signalized intersections,'' \emph{Probability in the
  Engineering and Informational Sciences}, vol.~22, no.~4, pp. 587--602, 2008.

\bibitem{liu2021boundary}
X.~Liu and H.~Rastgoftar, ``Boundary control of traffic congestion modeled as a
  non-stationary stochastic process,'' \emph{arXiv preprint arXiv:2103.14278},
  2021.

\bibitem{liu2021conservation}
------, ``Conservation-based modeling and boundary control of congestion with
  an application to traffic management in center city philadelphia,''
  \emph{arXiv preprint arXiv:2102.00552}, 2021.

\bibitem{rastgoftar2020integrative}
H.~Rastgoftar, J.-B. Jeannin, and E.~Atkins, ``An integrative behavioral-based
  physics-inspired approach to traffic congestion control,'' in \emph{Dynamic
  Systems and Control Conference}, vol. 84287.\hskip 1em plus 0.5em minus
  0.4em\relax American Society of Mechanical Engineers, 2020, p. V002T23A003.

\bibitem{rastgoftar2019integrative}
H.~Rastgoftar and E.~Atkins, ``An integrative data-driven physics-inspired
  approach to traffic congestion control,'' \emph{arXiv preprint
  arXiv:1912.00565}, 2019.

\bibitem{rastgoftar2020resilient}
H.~Rastgoftar and A.~Girard, ``Resilient physics-based traffic congestion
  control,'' in \emph{2020 American Control Conference (ACC)}.\hskip 1em plus
  0.5em minus 0.4em\relax IEEE, 2020, pp. 4120--4125.

\bibitem{coogan2017formal}
S.~Coogan, M.~Arcak, and C.~Belta, ``Formal methods for control of traffic
  flow: Automated control synthesis from finite-state transition models,''
  \emph{IEEE Control Systems Magazine}, vol.~37, no.~2, pp. 109--128, 2017.

\bibitem{coogan2015traffic}
S.~Coogan, E.~A. Gol, M.~Arcak, and C.~Belta, ``Traffic network control from
  temporal logic specifications,'' \emph{IEEE Transactions on Control of
  Network Systems}, vol.~3, no.~2, pp. 162--172, 2015.

\bibitem{christofa2013person}
E.~Christofa, I.~Papamichail, and A.~Skabardonis, ``Person-based traffic
  responsive signal control optimization,'' \emph{IEEE Transactions on
  Intelligent Transportation Systems}, vol.~14, no.~3, pp. 1278--1289, 2013.

\bibitem{wang2018dynamic}
Y.~Wang, W.~Y. Szeto, K.~Han, and T.~L. Friesz, ``Dynamic traffic assignment: A
  review of the methodological advances for environmentally sustainable road
  transportation applications,'' \emph{Transportation Research Part B:
  Methodological}, vol. 111, pp. 370--394, 2018.

\bibitem{lewis2012optimal}
F.~L. Lewis, D.~Vrabie, and V.~L. Syrmos, \emph{Optimal control}.\hskip 1em
  plus 0.5em minus 0.4em\relax John Wiley \& Sons, 2012.

\end{thebibliography}

\end{document}